\numberwithin{equation}{section}
\newtheorem{theorem}[equation]{Theorem}
\newtheorem{claim*}{Claim}
\theoremstyle{definition}
\definecolor{darkgreen}{rgb}{0,0.5,0}
\definecolor{rem}{rgb}{0.8,0,0}
\definecolor{new}{rgb}{0.3,0.1,0.9}
\definecolor{reply}{rgb}{0,0,0.8}
\definecolor{gray}{gray}{0.7}
\renewcommand{\gcd}{\text{gcd}}
\newcommand{\F}{\mathbb{F}}
\newcommand{\Q}{\mathbb{Q}}
\newcommand{\R}{\mathbb{R}}
\newcommand{\Z}{\mathbb{Z}}
\newcommand{\rhobar}{{\overline{\rho}}}
\newcommand{\Xp}{X_E^{-}(p)}
\newcommand{\Frob}{\text{Frob}}
\newcommand{\oE}{\overline{E}}
\newcommand{\oF}{\overline{F}}
\newcommand{\GL}{\text{GL}}
\newcommand{\id}{\text{id}}
\DeclareMathOperator{\End}{End}
\begin{document}

\title {Revisiting the equation $x^2 + y^3 = z^p$}

\author{Nuno Freitas}
\address{Instituto de Ciencias Matemáticas (ICMAT),
          Nicolás Cabrera 13-15
         28049 Madrid, Spain}
\email{nuno.freitas@icmat.es}

\author{Diana Mocanu}
\address{Max Planck Institute for Mathematics,
          Vivatsgasse 7,
         53111 Bonn, Germany
         }

         \email{d.mocanu@mpim-bonn.mpg.de}
\author{Ignasi S\'anchez-Rodr\'iguez}
\address{Universitat de Barcelona (UB),
        Gran Via de les Corts Catalanes 585,
        08007 Barcelona, Spain}
\email{ignasi.sanchez@ub.edu}

\thanks{Freitas was partly supported by the PID 2022-136944NB-I00 grant of the MICINN
(Spain)}
\thanks{S\'anchez-Rodr\'iguez was partly supported by the PID 2019-107297GB-I00 grant of the MICINN (Spain)}

\keywords{Fermat equations, symplectic isomorphisms, modular curves}
\subjclass[2010]{}

\begin{abstract}
Let $E/\mathbb Q$ be an elliptic curve and $p \geq 3$ a prime. The modular curve~$X_E^-(p)$ parameterizes elliptic curves with $p$-torsion modules anti-symplectically isomorphic to~$E[p]$.

The work of Freitas–Naskręcki–Stoll uses the modular method to show that all primitive non-trivial solutions of the Fermat-type equation $x^2 + y^3 = z^p$
give rise to rational points on~$X_E^-(p)$ with $E \in 
\{27a1,54a1,96a1,288a1,864a1,864b1,864c1 \}$.
Using a criterion classifying the existence of local points due to the first two authors, we show that, for $E$ any of the curves with conductor 864 and
certain primes~$p \equiv 19 \pmod{24}$, 
we have $X_E^-(p)(\Q_\ell) = \emptyset$.
Furthermore, for each $E$ in the list and any $p$, we prove that either $X_E^-(p)$ can be discarded using the same criterion, or it cannot be discarded using purely local information.
\end{abstract}
\maketitle

\section{Introduction}

Let $K$ be a field, $E/K$ an elliptic curve and $p \geq 3$ a prime. The non-cuspidal $K$-points of the curve~$X^-_E(p)$ parametrize pairs $(E', \phi)$ where $E'/K$ is an elliptic curve and $\phi : E[p] \to E'[p]$ a strictly anti-symplectic isomorphism of $G_K$-modules; the curve $X^+_E(p)$ parameterizes pairs where $\phi$ is a symplectic isomorphism instead; see \cite[\S 3]{FMlocal} for a reminder on~$X^\pm_E(p)$. 

For an elliptic curve $E/K$, we let $\rhobar_{E,p}$ denote its $p$-torsion representation.

For $E/\Q_\ell$ with good reduction denote by $\oE$ its reduction over $\F_\ell$. 
 Let $\pi\in \End(\oE)$ be the Frobenius endomorphism satisfying the equation $\pi^2-a_\ell \:\pi+\ell=0$ of discriminant 
$\Delta_\ell := a_\ell^2 - 4\ell$ where $a_\ell:=(\ell+1) - \# \oE(\F_\ell)$. Set also $b_E:=[\End (\oE): \Z[{\pi}]]$, and
note that $b_E$ is finite when $\oE$ is ordinary.

Let $K=\Q$.
In~\cite{FMlocal}, the first two authors show that $X^-_E(p)(\R)\neq \emptyset$ and 
give a complete classification of when $X^-_E(p)(\Q_\ell)\neq \emptyset$, for $\ell \neq p$. In particular, the case where $\ell$ is a prime of good reduction of~$E$ is covered by Theorem~5.3 and Remark 5.4 in {\it loc. cit.} 
and is as follows.
\begin{theorem}\label{thm:main}
    Let $\ell$ and $p\geq 3$ be different primes. Let $E/\Q$ have good reduction at~$\ell$. Then $X_E^-(p)(\Q_\ell)=\emptyset$ if and only if $E$ has ordinary reduction and all of the following hold:
    \begin{enumerate}
        \item $p\equiv 3 \pmod{4}$;
        \item $-p\Delta_\ell = s^2$ for some $s\in \Z$.
        \item $p\mid \#\rhobar_{E,p}(\Frob_\ell)$ or, equivalently, $p\mid \Delta_\ell$ and $p\nmid b_E$.
        \item for all primes $q\neq \ell$, $q\mid \Delta_\ell \implies (q/p)\neq -1$.
        \item $\ell<p^2/16$.
    \end{enumerate}
    where $\Frob_\ell$ is a Frobenius element at~$\ell$.
\end{theorem}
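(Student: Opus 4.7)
The plan is to translate each $\Q_\ell$-point of $X_E^-(p)$ into a pair $(E',\phi)$ with $E'/\Q_\ell$ an elliptic curve and $\phi\colon E[p]\to E'[p]$ an anti-symplectic $G_{\Q_\ell}$-module isomorphism, and then to case-split on the reduction type of $E'$. Since $E$ has good reduction at $\ell$, $\rhobar_{E,p}$ restricted to $G_{\Q_\ell}$ is unramified and pinned down by the conjugacy class of $\rhobar_{E,p}(\Frob_\ell)$, whose characteristic polynomial has discriminant $\Delta_\ell \bmod p$. The first step is to decouple two questions: (a) does there exist some $G_{\Q_\ell}$-module isomorphism $E[p]\simeq E'[p]$, and (b) given (a), can it be chosen anti-symplectic? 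Any two such isomorphisms differ by postcomposition with an element of the centralizer $Z\subseteq \GL_2(\F_p)$ of $\rhobar_{E,p}(\Frob_\ell)$, so an anti-symplectic $\phi$ exists iff $\det(Z)\not\subseteq (\F_p^\times)^2$. A direct case check on $\rhobar_{E,p}(\Frob_\ell)$ yields $\det(Z)=\F_p^\times$ in the scalar, split-torus and non-split-torus cases, and $\det(Z)=(\F_p^\times)^2$ exactly when the image is non-scalar with a repeated eigenvalue---that is, precisely condition~(3), using the identification $p \mid \#\rhobar_{E,p}(\Frob_\ell) \iff p\mid \Delta_\ell$ and $p\nmid b_E$, which reflects how $b_E$ measures the distance from $\Z[\pi]$ to $\End(\oE)$ and hence the shape of the lift of $\Frob_\ell$ mod $p$.

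Assuming (1)--(5), I would then show that no $E'/\Q_\ell$ of any reduction type supplies an anti-symplectic iso. If $E'$ has good reduction, the Hasse bound combined with (5) forces $|a_\ell'|<p/2$, so $a_\ell' \equiv a_\ell \pmod p$ implies $a_\ell'=a_\ell$; under (3) the centralizer rigidity from the previous paragraph then forces any matching $\phi$ to be symplectic. If $E'$ has multiplicative reduction, $\rhobar_{E',p}$ restricted to $G_{\Q_\ell}$ is reducible, an extension of an unramified character by an unramified Tate-twist character; matching it with the non-scalar unipotent $\rhobar_{E,p}(\Frob_\ell)$ yields simultaneous eigenvalue and determinant congruences mod $p$ whose solubility, tracked through the Weil pairing, translates into quadratic-symbol conditions delivering exactly (1), (2) and (4). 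If $E'$ has additive reduction, I would stratify by inertial type (tame cyclic in general, plus a small list of wild types when $\ell \in \{2,3\}$) via N\'eron--Ogg--Shafarevich and verify that each configuration compatible with the unipotent Frobenius produces no new obstruction beyond those already encoded in (1), (2), (4).

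The converse direction I would handle by explicit construction of $E'$ when some condition fails: failure of (5) leaves room for $a_\ell'\equiv a_\ell \pmod p$ with $a_\ell'\ne a_\ell$, giving a non-isogenous good-reduction $E'$ realising an anti-symplectic $\phi$; failure of (1), (2), or (4) allows an appropriate multiplicative-reduction $E'$ built from a Tate parameter satisfying the complementary quadratic conditions; failure of (3) means $\det(Z)=\F_p^\times$, whereupon any $E'$ with $\rhobar_{E',p}\cong \rhobar_{E,p}$ suffices. The main obstacle I expect is the bookkeeping in the additive-reduction case: one has to check every inertial type for $E'$ against the rigid unipotent Frobenius of $\rhobar_{E,p}$ and verify that the wild types at $\ell\in\{2,3\}$ introduce no further independent conditions. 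A secondary delicate point is the precise translation of the symplectic/anti-symplectic type through non-trivial unramified quadratic twists, which is where the $p\equiv 3\pmod 4$ hypothesis enters, as the statement that $-1$ is a non-square modulo $p$.
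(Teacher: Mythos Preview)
The paper does not prove this theorem. As the paragraph immediately preceding the statement makes explicit, Theorem~\ref{thm:main} is quoted from the companion paper \cite{FMlocal} (Theorem~5.3 and Remark~5.4 there); the present paper invokes it only as a black box, first to run the {\tt Magma} search behind Theorem~\ref{cor:main} and then as motivation for Theorem~\ref{thm:pts23p}. There is therefore no proof in this paper to compare your proposal against.

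For what it is worth, your outline is a plausible shape for such an argument: the centralizer dichotomy correctly explains why condition~(3) singles out the unique Frobenius conjugacy class in $\GL_2(\F_p)$ for which the symplectic type of an isomorphism is rigid, and the Hasse bound together with a Tate-curve analysis is the natural source of (1), (2), (4), (5). Two points are underspecified, however. First, in the good-reduction branch you assert that $a_\ell'=a_\ell$ plus centralizer rigidity forces $\phi$ to be \emph{symplectic}; rigidity only says all isomorphisms have the same type, so you still need an input (of the kind supplied by \cite[Theorem~12]{symplectic}) guaranteeing that at least one symplectic isomorphism exists. Second, you flag the additive-reduction stratification as the main obstacle, but in fact once $E'[p]$ is unramified and $p\neq\ell$ the additive case essentially cannot occur (inertia would act through a finite prime-to-$p$ group and hence inject into $\GL_2(\F_p)$), so the real work lies in the multiplicative case; this is consistent with how the paper itself handles the dichotomy in the proof of Theorem~\ref{thm:pts23p}. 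Whether \cite{FMlocal} organizes the argument along your lines can only be checked there.
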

As an application of this result, in Section~\ref{sec:23p}, we revisit the Fermat-type equation~$x^2 + y^3 = z^p$ where $p \geq 7$ is a prime.
This equation was solved for $p=7$ by Poonen--Schaefer--Stoll~\cite{237} and for $p=11$ by Freitas--Naskr{\k e}cki--Stoll~\cite{23n} under GRH. The approach in {\it loc. cit.} involves two steps: first, use the modular method to show that a primitive solution to the equation gives a $\Q$-point on~$X_E^\pm(p)$ for $E$ one of the curves in~\eqref{list}; second, determine all the rational points of the relevant $X_E^\pm(p)$ and reconstruct the solutions (the expectation is that only the known solutions will emerge).

Applying the same strategy for~$p \geq 13$ is challenging because the genus of~$X_E^\pm(p)$ grows quickly with~$p$, causing existing techniques to be ineffective in finding rational points.
Note that the curve $X_E^+(p)(\Q)$ always contains the canonical point $(E, \id_{E[p]})$, while $\Xp(\Q)$ may be empty.
 Using Theorem~\ref{thm:main} we have an efficient way to show that $X_E^-(p)(\Q)=\emptyset$ by proving that it lacks $\Q_\ell$-points, for some carefully chosen $\ell \ne p$ where $E$ has good reduction. This complements the local analysis done in~\cite{23n} using primes of bad reduction, and largely improves that in~\cite{symplectic} using primes of good reduction.

 To prove the next theorem, we test the hypothesis of Theorem~\ref{thm:main} using {\tt Magma} for pairs~$(E,p)$ with $E$ in~\eqref{list} and $p < 1000$. We remark that the same code produces additional examples if run for larger values of $p$.

\begin{theorem}
    
\label{cor:main}
Let $(a,b,c)\in \Z^3$ satisfy $a^2 + b^3 = c^p$ and $\gcd(a,b,c) = 1$. Then $(a,b,c)$ does not give rise to a rational point on
$X_{E}^-(p)$ for the following pairs $(E,p)$:
\begin{table}[!ht] 
    \centering
    \begin{tabular}{|c||c|}
     \hline
        $E $& $p$ \\ \hline
         $864a1$& $19,43,211, 307, 499, 523, 547, 571, 739, 787, 859, 907$  \\ \hline
         $864b1$& $ 19, 43, 67, 307, 331, 571, 619, 643, 691, 787, 811, 859$\\ \hline
         $864c1$&  $211, 331, 499, 523, 571, 643, 691, 859$ \\ \hline
    \end{tabular}
 
\end{table}
\end{theorem}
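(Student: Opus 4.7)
By the results of Freitas--Naskr{\k e}cki--Stoll recalled in the introduction, any primitive integer solution to $a^2+b^3=c^p$ gives rise to a rational point on~$X_E^\pm(p)$ for some $E$ in the list~\eqref{list}. Hence, to prove the theorem it suffices, for each pair $(E,p)$ in the table, to establish that $X_E^-(p)(\Q)=\emptyset$; the strategy is to do so by exhibiting a single auxiliary prime $\ell\neq p$ of good reduction of~$E$ at which $X_E^-(p)(\Q_\ell)=\emptyset$, and then invoking Theorem~\ref{thm:main} to reduce this vanishing to an explicit numerical check.

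Concretely, for each $(E,p)$ in the target table I loop $\ell$ over the primes with $\ell\notin\{2,3,p\}$ and $\ell<p^2/16$; these are automatically primes of good reduction of~$E$ since $\cond(E)=864=2^5\cdot 3^3$, and the upper bound is exactly condition~(5). For each such $\ell$ I compute $a_\ell$, form $\Delta_\ell=a_\ell^2-4\ell$, and test in turn: ordinariness ($\ell\nmid a_\ell$); condition~(1), which is independent of~$\ell$ and holds for every $p$ in the table (all are $\equiv 19\pmod{24}$, hence in particular $\equiv 3\pmod 4$); condition~(2), the squareness of $-p\Delta_\ell$; condition~(3), checked by verifying $p\mid\Delta_\ell$ together with $p\nmid b_E$, where $b_E=[\End(\oE):\Z[\pi]]$ is recovered from the factorisation of $\Delta_\ell$ in the standard way; and condition~(4), a finite set of Kronecker-symbol evaluations $(q/p)$ over the prime divisors $q\neq\ell$ of~$\Delta_\ell$. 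As soon as some~$\ell$ passes all five checks I record it as a witness and move on; the whole procedure is implemented in \texttt{Magma}.

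The implementation itself is routine; the essential content is the existence of a witness. Conditions~(2)--(4) are quite restrictive simultaneously: (2) already forces $\Delta_\ell=-p\, t^2$ for some integer~$t$, pinning $\ell$ to a thin family of primes; (3) then imposes a strong divisibility; and (4) is a global quadratic-reciprocity constraint that can easily fail. Consequently one does not expect every $(E,p)$ with $E$ of conductor~$864$ and $p<1000$ satisfying~(1) to admit a witness~$\ell<p^2/16$, and indeed the entries of the table are precisely those that do. The expected main obstacle is thus not algorithmic but purely arithmetic: one must simply run the search and report the output, with the size of the range growing quadratically in~$p$.
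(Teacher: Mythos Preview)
Your proposal is correct and matches the paper's own proof essentially line for line: for each $(E,p)$ in the table one searches for a prime $\ell\neq 2,3,p$ with $\ell<p^2/16$ satisfying the hypotheses of Theorem~\ref{thm:main}, and reports the successful witnesses via a \texttt{Magma} computation. The only cosmetic difference is that the paper verifies condition~(3) through Centeleghe's \texttt{Magma} package for computing $\#\rhobar_{E,p}(\Frob_\ell)$ rather than by extracting $b_E$ directly, but this is the same check by the equivalence stated in Theorem~\ref{thm:main}.
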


Finally, in Section~\ref{sec:last}, we show that, for all $E$ in~\eqref{list} and any~$p\geq 7$  either
$\Xp(\Q_\ell) = \emptyset$ for some~$\ell \neq p$ (and hence Theorem~\ref{thm:main} is satisfied)
or $\Xp(\Q_\ell)$ contains points arising from $\ell$-primitive solutions of $x^2 + y^3 = z^p$ for all~$\ell \neq p$ (see~Theorem~\ref{thm:pts23p}), and so~$\Xp$ cannot be excluded by local information away from $p$.

\subsection{Acknowledgements}
We thank Alain Kraus for helpful discussions and remarks.
This work was completed while all the authors were at the Max Planck Institute for Mathematics in Bonn. We are grateful for its hospitality and financial support.

\section{The Fermat equation \texorpdfstring{$x^2 + y^3 = z^p$}{x² + y³ = zᵖ}}\label{sec:23p}



Consider the generalized Fermat equation
\begin{equation} \label{eq:23p}
  x^2 + y^3 = z^p,
\end{equation}
where $p > 5$ is a prime.
A solution $(a,b,c) \in \Z^3$ of~\eqref{eq:23p} is called {\it primitive} if $\gcd(a,b,c)=1$ and it is called {\it non-trivial} if $abc \neq 0$. 
A triple $(a,b,c) \in \Z_\ell^3$ is called an {\it $\ell$-primitive solution} if it satisfies~\eqref{eq:23p} and  $\min\{v_\ell(a),v_\ell(b),v_\ell(c) \} = 0$.

Note that, besides the trivial solutions, equation~\eqref{eq:23p} also admits the `Catalan' solutions $(\pm 3)^2 + (-2)^3 = 1^p$.
As mentioned in the introduction, the equation~\eqref{eq:23p} has been solved for $p=7$ and for $p=11$ in \cite{23n} under GRH. We now summarize the strategy in {\it loc. cit.}.

Let $p \geq 11$ be a prime and suppose that $(a,b,c) \in \Z^3$ is a primitive solution to~\eqref{eq:23p} with exponent~$p$. We consider the associated Frey elliptic curve
\begin{equation} \label{Eq:Frey}
    F_{a,b} \; \colon \; y^2 = x^3 + 3bx - 2a
\end{equation}
whose arithmetic invariants are
\begin{equation} \label{E:jInvariant}
  c_4 = -12^2 b, \qquad c_6 = -12^3 a, \qquad
  \Delta = -12^3 (a^2 + b^3) = -12^3 c^p.
\end{equation}
Consider also the seven elliptic curves (specified by their Cremona label):
 \begin{equation}\label{list}
     27a1,\; 54a1,\; 96a1,\; 288a1,\; 864a1,\; 864b1,\; 864c1.
 \end{equation}

In~\cite{23n}, the authors employ the modular method to
show that, for some $d \in \{ \pm 1, \pm 2, \pm 3, \pm 6 \}$, the
quadratic twist $dF_{a,b}$ of the Frey curve gives rise to a rational point
(satisfying certain $2$-adic and $3$-adic conditions)
on the modular curve $X_E^+(p)$ or $X_E^-(p)$,
where $E$ is one of the curves in~\eqref{list}. Therefore, it suffices to find the $\Q$-points of $X_E^\pm(p)$ for all $E$ in \eqref{list} to solve the equation \eqref{eq:23p}. We say that we discard $X_E^+(p)$ or $X_E^-(p)$ from the list whenever we can show that it has no relevant rational points.

The curves $X_E^+(p)(\Q)$ always contain the canonical point $(E,\id_{E[p]})$, but
some of the~$X_E^-(p)$ might
lack rational points.
In \textit{loc. cit.}, the authors use inertial types, local symplectic criteria at the bad reduction primes $\ell=2,3$, and global arguments to discard many curves $X_E^-(p)$; the remaining ones are given in Table~\ref{tab:pmod24}.

\begin{table}[h]
    \centering
    \begin{tabular}{|c||c|c|c|c|c|c|}
    \hline
    \( p \pmod{24} \) & \( 54a1 \) & \( 96a1 \) & \( 288a1 \) & \( 864a1 \) & \( 864b1 \) & \( 864c1 \) \\ \hline
    \( 5 \) &  \( - \) &  & & \( - \) & \( - \) & \(  - \) \\ \hline
    \( 7 \) & \( - \) &  &  &  &  &  \\ \hline
    \( 11 \) & & & \(-\) &  &  & \\ \hline
    \( 13 \) &  & \( -\) & &  &  &  \\ \hline
    \( 19 \) &  & \(-\) & \( -\) & \( - \) & \( - \) & \( - \) \\ \hline
\end{tabular}
    \caption{Curves $X_E^-(p)$ left to eliminate for $p\geq 11$; see~\cite[Table 4]{23n}.}
    \label{tab:pmod24}
\end{table}
In the work of Freitas--Kraus~\cite{symplectic},
the authors excluded from Table~\ref{tab:pmod24} the possibilities
$X^-_{864a1}(p)$ for $p=19,43$ and $X^-_{864b1}(p)$ for $p=19,43,67$ (see Theorem 23 in {\it loc. cit}).
They achieved this by showing that $X_E^-(p)(\Q_\ell)=\emptyset$ for some small primes $\ell$ of good reduction, namely,
\begin{equation}\label{NuKrtriples}
 (E, \ell, p) =
(864a1,  5,  19),   (864a1,  31,  43), (864b1,  7,  19),  (864b1,  13,  43),  (864b1,  19,  67).
\end{equation}
We will say that $(E,\ell,p)$ is an {\it exceptional triple} if $X_E^-(p)(\Q_\ell)=\emptyset$ where $E$ and~$p$ are given in Table~\ref{tab:pmod24} and $\ell \neq 2,3$; in particular, $\ell$ is a prime of good reduction for~$E$.

The triples in~\eqref{NuKrtriples} were found by first performing a brute-force search for $5 \leq \ell<200$ and $p<400$ such that $p \mid \#\rhobar_{E,p}(\Frob_\ell)$ and, secondly, testing whether
\cite[Theorem~12]{symplectic} applies with $E$ and $F_{a,b}$ for all $1 \leq a,b \leq \ell$ satisfying $F_{a,b}[p] \simeq E[p]$.
By applying Theorem~\ref{thm:main}, we are able to replace these heavy computations with a quick verification of hypotheses (1)--(5).

We note that Theorem~\ref{thm:main} implies that $X_E^-(p)(\Q_\ell) \neq \emptyset$ whenever $p \equiv 1 \pmod 4$, so there are no exceptional triples with $p \equiv 5, 13 \pmod{24}$.
We cannot find exceptional triples when $E$ has CM, because $p \nmid \#\rhobar_{E,p}(\Frob_\ell)$ in that case, as the image of $\rhobar_{E,p}$ is the normalier of a Cartan subgroup of $\GL_2(\F_p)$; thus, there are no exceptional triples with $E=288a1$.
Moreover,  $E=96a1$ admits a~$2$-isogeny and $E=54a1$ a $3$-isogeny, hence \cite[Corollary~3.6]{FMlocal} gives
$X_E^-(p)(\Q) \neq \emptyset$ for
$$(E, p \pmod{24})=(96a1,19),(54a1,7).$$
 In summary, exceptional triples can occur only for
\begin{equation}\label{pW}
    p\equiv 19 \pmod{24} \qquad \text{ and } \qquad E = 864a1, \; 864b1 \; \text{ or } \; 864c1.
\end{equation}
\subsection{Proof of Theorem~\ref{cor:main}}
We have run a calculation in {\tt Magma} checking for exceptional triples $(E,\ell,p)$ with $(E,p)$ in \eqref{pW} and $p<1000$ satisfying the conditions of Theorem~\ref{thm:main}. The code is available at \cite{code} and uses the work of Centeleghe~\cite{Centeleghe} and its associated {\tt Magma} package to check the condition $p \mid \#\rhobar_{E,p}(\Frob_\ell)$. 

\section{Points arising from \texorpdfstring{$\ell$}{l}-primitive solutions}\label{sec:last}
To finish our analysis of the local information associated with equation~\eqref{eq:23p}, we will show that Table~\ref{tab:pmod24} is locally complete in the following sense.
Consider the following set
\[
\mathcal{X}_{E,\ell, p}:=\{P\in X_E^-(p)(\Q_\ell) | \: P:=(dF_{a,b}, \phi), \: (a,b,c)\in \Z_\ell \text{ is an }\ell\text{-primitive solution to }\eqref{eq:23p} \}
\]
where $dF_{a,b}$ is a quadratic twist of the Frey curve $F_{a,b}$ given in \eqref{Eq:Frey}.
Clearly, any primitive solution $(a,b,c) \in \Z^3$ to~\eqref{eq:23p} is
an $\ell$-primitive solution for all primes~$\ell$. Therefore, 
showing that $\mathcal{X}_{E,\ell, p} \subseteq X_E^-(p)(\Q_\ell)$ is empty, suffices to eliminate $(E,p)$ from Table~\ref{tab:pmod24}.
In this direction, the results in~\cite{23n} show that, for the bad reduction primes $\ell =2, 3$, the pairs $(E,p)$ in Table~\ref{tab:pmod24} have $\mathcal{X}_{E,2, p}\neq \emptyset$ or $\mathcal{X}_{E,3, p}\neq \emptyset$. In this section, we prove the analogous result for $\ell>3$, the primes of good reduction for $E$.
\begin{theorem}\label{thm:pts23p}  
 Let $(E,p)$ be as in Table~\ref{tab:pmod24} and $\ell>3$ a prime $\neq p$. Assume $X_E^-(p)(\Q_\ell) \neq \emptyset$. Then $\mathcal{X}_{E,\ell, p}\neq \emptyset$.
\end{theorem}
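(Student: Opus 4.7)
The plan is: given a $\Q_\ell$-point $P=(E',\phi)$ on $X_E^-(p)$, I will construct an $\ell$-primitive solution $(a,b,c)\in\Z_\ell^3$ of~\eqref{eq:23p} and a scalar $d\in\Q_\ell^*$ with $dF_{a,b}\cong E'$ over $\Q_\ell$. Transporting $\phi$ across this isomorphism yields an anti-symplectic $\phi'$, and then $(dF_{a,b},\phi')$ lies in $\mathcal{X}_{E,\ell,p}$, proving the theorem.

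First I would constrain the reduction type of $E'$. Every $E$ in~\eqref{list} has conductor dividing $864=2^5\cdot 3^3$, so $E$ has good reduction at each prime $\ell>3$ and $E[p]$ is unramified at $\ell$. Since $E'[p]\cong E[p]$ as $G_{\Q_\ell}$-modules, $E'[p]$ is also unramified. For $\ell\geq 5$, any additive Kodaira type for $E'$ would force inertia to act on $E'[p]$ through a non-trivial cyclic quotient of order in $\{2,3,4,6\}$, and for $p\geq 7$ such an action is always non-trivial on $E'[p]$, contradicting unramifiedness. Hence $E'$ has either good reduction at $\ell$, or multiplicative reduction of type $I_{ps}$ for some $s\geq 1$ (the condition $p\mid v_\ell(\Delta(E'))$ being precisely what unramifiedness of $p$-torsion requires on a Tate curve).

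Next is the parametrization. Fix a minimal short Weierstrass model $y^2=x^3+A'x+B'$ of $E'$ at $\ell$. Matching invariants with a generic twist $dF_{a,b}\colon y^2=x^3+3d^2 bx-2d^3 a$ and absorbing the isomorphism parameter $u$ into $D:=u^2 d$ shows that $dF_{a,b}\cong E'$ over $\Q_\ell$ iff there exists $D\in\Q_\ell^*$ with $b=A'/(3D^2)$, $a=-B'/(2D^3)$; the identity $c_4^3-c_6^2=1728\,\Delta$ then gives automatically $c^p = a^2+b^3 = -\Delta(E')/(1728\,D^6)$. Since $\gcd(6,p)=1$ for $p\geq 7$, Bézout provides $m,n\in\Z$ with $6m+pn=1$, and the map $(D,c)\mapsto D^6c^p$ is surjective onto $\Q_\ell^*$; with $G:=-\Delta(E')/1728$ one concrete solution is $D=G^m,\ c=G^n$, and the full fibre is $\{(D\lambda^p,c\lambda^{-6}):\lambda\in\Q_\ell^*\}$. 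Under this scaling $(v_\ell(a),v_\ell(b),v_\ell(c))$ shifts by $-(3p,2p,6)\cdot v_\ell(\lambda)$.

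The final step is to tune $v_\ell(\lambda)$ so that $(a,b,c)\in\Z_\ell^3$ is $\ell$-primitive. When $E'$ has good reduction, a minimal model has $v_\ell(\Delta(E'))=0$ and $\min\{v_\ell(A'),v_\ell(B')\}=0$; the choice $v_\ell(\lambda)=0$ yields $v_\ell(c)=0$ and $v_\ell(a),v_\ell(b)\geq 0$, so $\min\{v_\ell(a),v_\ell(b),v_\ell(c)\}=0$. When $E'$ is of type $I_{ps}$, a minimal model has $v_\ell(A')=v_\ell(B')=0$ and $v_\ell(G)=ps$; taking $v_\ell(\lambda)=-ms$ gives $v_\ell(a)=v_\ell(b)=0$ and $v_\ell(c)=s(np+6m)=s$, again $\ell$-primitive. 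The main delicate point I expect is the Kodaira-type bookkeeping to rule out additive reduction uniformly in $\ell\geq 5$ and $p\geq 7$; once that is settled, everything else is routine manipulation powered by $\gcd(6,p)=1$.
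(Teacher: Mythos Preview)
Your proposal is correct and in fact takes a cleaner route than the paper's own proof. Both arguments begin identically: since $E$ has good reduction at $\ell>3$ and $E'[p]\simeq E[p]$, the curve $E'$ has either good reduction or multiplicative reduction of type $I_{ps}$. From there the paper splits into cases: in the good case it only arranges $\overline{F_{a,b}}\cong\overline{E'}$ over $\F_\ell$ (by adjusting a parameter $u$ so that $\bar a^2+\bar b^3$ is a $p$-th power in $\F_\ell^*$, using $\gcd(12,p)=1$) and then invokes \cite[Theorem~12]{symplectic} to promote this to a symplectic isomorphism on $p$-torsion; in the multiplicative case it matches $j$-invariants, solving a system over $\F_\ell$ using $\gcd(6,p)=1$, and concludes that some quadratic twist of $F_{a,b}$ is isomorphic to $E'$. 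Your argument instead produces an honest $\Q_\ell$-isomorphism $dF_{a,b}\cong E'$ in \emph{both} cases by the single B\'ezout trick $6m+pn=1$ applied to $D^6c^p=-\Delta(E')/1728$, with a short valuation bookkeeping to secure $\ell$-primitivity. This is more uniform and avoids the external citation; the paper's approach, by contrast, makes the role of the residue field more visible. Two small points worth tightening in a final write-up: your exclusion of additive reduction is stated a bit telegraphically (the clean statement is that the inertia image in $\GL_2(\Z_p)$ has order $e\in\{2,3,4,6\}$ and, since the kernel of $\GL_2(\Z_p)\to\GL_2(\F_p)$ is pro-$p$ with $p\ge 7>e$, reduction is injective on it; the $I_n^*$ case is handled by the ramified quadratic twist); and you should note that in both reduction cases your $D$ has $v_\ell(D)=0$, so one may take $d=D\in\Z_\ell^*$, matching the intended meaning of ``quadratic twist'' in the definition of $\mathcal{X}_{E,\ell,p}$.
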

\begin{proof}
Since $X_E^-(p)(\Q_\ell) \neq 0$, there is an elliptic curve $E_0 / \Q_\ell$ and $\phi : E[p] \simeq E_0[p]$ an anti-symplectic isomorphism. Since $E$ has good reduction at~$\ell>3$, it follows that
$E_0[p]$ is unramified, so that $E_0$ has good reduction or (by the theory of the Tate curve) it has multiplicative reduction and $p \mid v(j(E_0))$.
In both cases, we will show that there is an $\ell$-primitive solution $a,b,c \in \Z_\ell$ to~\eqref{eq:23p} and $d\in \Z_\ell^*$, yielding a Frey elliptic curve $F_{a,b}/\Q_\ell$ given by  \eqref{Eq:Frey} such that
\begin{equation}\label{eq:W0Frey}
 dF_{a,b}[p] \stackrel{\varphi}{\simeq} E_0[p] \stackrel{\phi^{-1}}{\simeq} E[p]
\end{equation} where $\varphi$ is symplectic. As $\phi^{-1}$ is anti-symplectic by assumption, one gets that $(dF_{a,b}, \phi^{-1} \cdot \varphi)$ gives a point on $\mathcal{X}_{E,\ell, p}$, as desired.

Suppose first that $E_0 / \Q_\ell$ has good reduction. We will construct a Frey elliptic curve $F_{a,b}/\Q_\ell$ with good reduction and residual curve $\F_\ell$-isomorphic to $\overline{E_0}/\F_{\ell}$, the mod~$\ell$ reduction of~$E_0$. Then, \cite[Theorem 12]{symplectic} assures that \eqref{eq:W0Frey} holds with $d=1$.
The curve $\overline{E_0} / \F_\ell$ admits a short Weierstrass model $y^2 = x^3 + Ax + B$. Since $\ell \neq 2,3$ we can define $\oF_{a,b} / \F_\ell$ by
\[
\oF_{a,b}  : y^2 = x^3 + 3\bar{b} x - 2 \bar{a}, \qquad \bar{a} =\frac{-B}{2}u^6  , \qquad \bar{b} =   \frac{A}{3}u^4\qquad \text{for some }u\in \F_\ell^*
\]
which is an elliptic curve because
$$-12^3(\bar{a}^2+\bar{b}^3)=\Delta(\oF_{{a},{b}}) = \Delta(\overline{E_0})u^{12} \neq 0,$$
and it is clearly ismorphic to $\oE_0$. 
We claim that there exists a choice of $u\in \F_\ell^*$ such that $\bar{a}^2+\bar{b}^3$ is a $p$-th power. We define $$\alpha:=\frac{B^2}{4}-\frac{A^3}{27}\in \F_\ell^*$$ and note that the claim is equivalent to the existence of $u\in \F_\ell^*$ such that $\alpha \cdot u^{12}$ is a $p$-th power. If $p\nmid \ell-1$, then all elements of $\F_\ell^*$ are $p$-th powers, and we can take $u=1$. If~$p\mid \ell-1$ then $\F_\ell^*/(\F_\ell^*)^p\cong \Z/p\Z$. Moreover, the map $u \to \alpha \cdot u^{12} \pmod{\F_\ell^{*p}}$ is surjective onto~$\F_\ell^*/(\F_\ell^*)^p$,
because $\gcd(12,p)=1$ (as $p>3$). Thus there is $u \in \F_\ell^*$ making $\bar{a}^2+\bar{b}^3=\alpha \cdot u^{12}$ a $p$-th power.

Finally, we denote the $p$-th root of  $\bar{a}^2+\bar{b}^3$ by $\bar{c}\in \F_\ell^*$.
Any lifts $a,b,c \in \Z_\ell$ of $\bar{a}$, $\bar{b}, \bar{c}$ yield a Frey curve $F_{a,b}$ with the required properties. Indeed, since $\overline{E_0}$ is non-singular
it follows that $\bar{a} \neq 0$ or $\bar{b} \neq 0$ and hence $a,b,c$ is $\ell$-primitive.

Suppose now that $E_0/\Q_\ell$ has multiplicative reduction with $v(j({E_0}))=-kp$, for $k\in \Z_{>0}$. In this case, we will construct a Frey elliptic curve $F_{a,b}/\Q_\ell$ with $j(F_{a,b})=j({E_0})\neq 0,1728$. This implies that $F_{a,b}/\Q_\ell$ or a quadratic twist of it is isomorphic to $E_0/\Q_\ell$
which immediately gives \eqref{eq:W0Frey}.

We will show that there is a choice of $\ell$-primitive $a,b,c\in \Z_\ell$ such that
\begin{equation} \label{eq:jinv}
\begin{cases}
    a^2+b^3=c^p,\\
    j(F_{a,b})=12^3\frac{b^3}{c^p}=j({E_0}).
\end{cases}
\end{equation}
Since $j({E_0})=-kp$ we need to pick $a,b,c$ with $v_\ell(a)=v_\ell(b)=0$ and $v_\ell(c)=k$. By writing $j(E_0)=u_{E_0}\cdot \ell^{-kp}$ and $c:=u_c\cdot \ell^{k}$, for some $u_{E_0},u_c \in \Z_\ell^*$ one gets that \eqref{eq:jinv} is equivalent to
\begin{equation} 
\begin{cases}
    12^3b^3=u_{E_0}\cdot u_c^p,\\
    a^2=c^p-\frac{u_{E_0}\cdot u_c^p}{12^3}.
\end{cases}
\end{equation}
We want to show that such $a,b,c\in \Z_\ell$ exist. By reducing everything modulo $\ell$ and Hensel's lemma, this is equivalent to showing that one can find $u_c\in \F_\ell^*$ such that
\begin{equation} \label{eq:uc}
\begin{cases}
    u_{E_0}\cdot u_c^p,\: \text{ is a cube in }\F^*_\ell,\\
    (-u_{E_0}\cdot u_c^p)/3 \text{ is a square in }\F^*_\ell.
\end{cases}
\end{equation}
If $3\nmid \ell-1$, then everything is a cube in $\F_\ell^*$. Thus, any choice of $u_c$ making $(-u_{E_0}\cdot u_c)/3$ a square works. If $3\mid \ell-1$, then $-1/3$ is a square in $\F_\ell^*$, so \eqref{eq:uc} is equivalent to $u_{E_0}\cdot u_c^p$ being a sixth power in $\F_\ell^*$. As in this case $6\mid \ell-1$, then $\F_\ell^*/(\F_\ell^*)^6\cong \Z/6\Z$. Since $\gcd(6,p)=1$ for $p>3$, one gets that $u_c \to u_{E_0}\cdot u_c^p \pmod{\F_\ell^{*6}}$ is surjective onto $\F_\ell^*/(\F_\ell^*)^6$, and thus there is a choice of $u_c$ making $u_{E_0}\cdot u_c^p$ a $6$-th power, concluding the proof.
\end{proof}

\end{document}